\numberwithin{equation}{section}
\newcommand{\cH}{\ensuremath{\mathcal{H}}}
\newcommand{\cP}{\ensuremath{\mathcal{P}}}
\newtheorem{Thm}{Theorem}[section]
\newtheorem{Lem}[Thm]{Lemma}
\newtheorem{Prop}[Thm]{Proposition}
\newtheorem*{Ex}{Example}
\newtheorem*{Prob}{Problem}
\newtheorem{Clm}{Claim}
\title{Holes and a chordal cut in a graph}
\author{
\begin{tabular}{c}
{\sc Suh-Ryung KIM}
\thanks{This research was supported by Basic Science Research Program
through the National Research Foundation of Korea (NRF) funded
by the Ministry of Education, Science and Technology (700-20100058).}\\
[1ex]
Department of Mathematics Education \\
Seoul National University, Seoul 151-742, Korea\\
{\tt srkim@snu.ac.kr}\\
\\
{\sc Jung Yeun LEE}
\thanks{This work was supported by NAP of Korea Research Council
of Fundamental Science and Technology.}
\thanks{Corresponding author}\\
[1ex]
National Institute for Mathematical Sciences \\
Daejeon 305-390, Korea \\
{\tt jungyeunlee@gmail.com}\\
\\
{\sc Yoshio SANO}
\thanks{This work was supported by Priority Research Centers Program
through the National Research Foundation of Korea (NRF) funded by
the Ministry of Education, Science and Technology
(MEST) (No. 2010-0029638).}\\
[1ex]
Pohang Mathematics Institute \\
POSTECH, Pohang 790-784, Korea \\
{\tt ysano@postech.ac.kr}
\end{tabular} }
\date{}
\begin{document}

\maketitle

\newpage

\begin{abstract}
A set $X$ of vertices of a graph $G$ is called a {\em clique cut} of $G$
if the subgraph of $G$ induced by $X$ is a complete graph
and the number of connected components of $G-X$ is greater than that of $G$.
A clique cut $X$ of $G$ is called a {\em chordal cut} of $G$
if there exists a union $U$ of connected components
of $G-X$ such that $G[U \cup X]$ is a chordal graph.

In this paper, we consider the following problem:
Given a graph $G$, does the graph have a chordal cut?
We show that $K_{2,2,2}$-free hole-edge-disjoint graphs
have chordal cuts if they satisfy a certain condition.
\end{abstract}

\noindent
{\bf Keywords:}
hole, clique, vertex cut, chordal graph,
competition graph, competition number \\

\noindent
{\bf 2010 Mathematics Subject Classification}: 05C20, 05C38, 05C75

\section{Introduction and Preliminaries}
\subsection{Introduction}

A set $X$ of vertices of a graph $G$ is called a {\em clique} of $G$
if the subgraph of $G$ induced by $X$ is a complete graph.
A set $X$ of vertices of a graph $G$ is called a {\em vertex cut} of $G$
if the number of connected components of $G-X$ is greater than that of $G$.
We call a vertex cut $X$ of $G$ a {\em clique cut} of $G$
if $X$ is a clique of $G$.
A clique cut $X$ of $G$ is called a {\em chordal cut} of $G$
if there exists a union $U$ of connected components
of $G-X$ such that $G[U \cup X]$ is a chordal graph.

In this paper, we consider the following problem:

\begin{Prob}
Given a graph $G$, does the graph have a chordal cut?
\end{Prob}

\begin{Ex}
{\rm
A cycle has no clique cut and so has no chordal cut.
}
\end{Ex}

\begin{Ex}
{\rm
Every clique cut of a chordal graph is a chordal cut.
}
\end{Ex}

This paper is organized as follows:
Subsection \ref{subsec:moti} describes
the authors' motivation to consider this problem.
Subsection \ref{subsec:prel} prepares some lemmas
which will be used in the following sections.
Section \ref{sec:main} presents a main result of this paper.
We show that $K_{2,2,2}$-free hole-edge-disjoint graphs
have chordal cuts if they satisfy a certain condition.
Section \ref{sec:comp} shows that
the main result can be applied to obtain
a sharp upper bound for the competition numbers of graphs.

\subsection{Motivation}\label{subsec:moti}

Some problems on the competition numbers of graphs
motivated us to consider the problem stated above.

The {\em competition graph} of a digraph $D$, denoted by $C(D)$,
has the same set of
vertices as $D$ and an edge between vertices $u$ and $v$ if and only if
there is a vertex $x$ in $D$ such that $(u,x)$ and $(v,x)$ are arcs of
$D$.
The notion of competition graph was introduced by Cohen~\cite{co} as a
means of determining the smallest dimension of ecological phase space.
Roberts~\cite{cn} observed that any graph together
with sufficiently many isolated vertices is the competition graph of an
acyclic digraph.  Then he defined the {\em competition number $k(G)$} of
a graph $G$ to be the smallest number $k$ such that $G$ together with
$k$ isolated vertices added is the competition graph of an acyclic
digraph.
It does not seem to be easy in general to compute $k(G)$ for all graphs
$G$, as Opsut~\cite{op} showed that the computation of the
competition number of a graph is an NP-hard problem.
It has been one of important research problems in the study of
competition graphs to characterize a graph by its competition
number.

Now we recall a theorem which shows that a chordal cut of a graph
is related to its competition number.

\begin{Thm}[{\cite[Theorem 2.2]{twoholes}}]\label{chordalpart}
Let $G$ be a graph and $k$ be a nonnegative integer.
Suppose that $G$ has a subgraph $G_1$ with $k(G_1) \leq k$
and a chordal subgraph $G_2$ such that
$E(G_1) \cup E(G_2) =E(G)$
and $V(G_1) \cap V(G_2)$ is a clique of $G_2$.
Then $k(G) \le k+1$.
\end{Thm}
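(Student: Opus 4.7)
The plan is to exhibit an acyclic digraph $D$ on $V(G) \cup \{w_1, \ldots, w_k, z\}$ (one fresh vertex $z$ adjoined) whose competition graph is $G$ together with these $k+1$ added vertices as isolates; this will give $k(G) \le k+1$. The idea is to take an acyclic digraph $D_1$ realizing $C(D_1) = G_1 \cup \{w_1, \ldots, w_k\}$ (which exists by $k(G_1) \le k$) and to augment it with arcs encoding the extra edges of $G_2$ using only the single new vertex $z$.

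The chordality of $G_2$ together with the assumption that $S := V(G_1) \cap V(G_2)$ is a clique of $G_2$ is used to fix a perfect elimination ordering $u_1, \ldots, u_m$ of $G_2$ in which $S$ occupies the last $r := |S|$ positions, i.e.\ $S = \{u_{m-r+1}, \ldots, u_m\}$. Then $\{u_1, \ldots, u_{m-r}\}$ is precisely $V(G_2) \setminus V(G_1)$, and for each $i \le m-r$ the set $C_i := \{u_i\} \cup N_{G_2, >i}(u_i)$ is a clique of $G_2$.

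I form $D$ from $D_1$ by adjoining $z$ and then adding (a) an arc $s \to z$ for every $s \in S$, which makes $S$ a clique in $C(D)$ and thereby realizes the edges of $G_2$ inside $S$; and (b) for each $i \in \{1, \ldots, m-r\}$ with $N_{G_2,>i}(u_i) \ne \emptyset$, arcs from every vertex of $C_i$ to a common prey $p_i$. The topological order for $D$ lists $u_1, \ldots, u_{m-r}$ first, then the vertices of $V(G_1) \cup \{w_1, \ldots, w_k\}$ in a fixed topological order of $D_1$, and finally $z$; each $p_i$ is chosen strictly later than $C_i$ in this order, preferably inside $\{u_{i+1}, \ldots, u_{m-r}\} \setminus C_i$ so that $p_i$ lies outside $V(D_1)$ and introduces no interaction with the arcs of $D_1$. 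Acyclicity of $D$ is then immediate from the ordering, and every edge of $G$ appears in $C(D)$: edges of $G_1$ come from $D_1$, edges of $G_2$ inside $S$ come from the common prey $z$, and any remaining edge of $G_2$ lies in some $C_i$ with $i \le m-r$ and comes from the common prey $p_i$.

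The main obstacle is showing that $C(D)$ contains no adjacency outside $E(G) = E(G_1) \cup E(G_2)$. The only dangerous candidates are pairs $\{a,b\}$ with $a \in V(G_1) \setminus V(G_2)$ and $b \in V(G_2) \setminus V(G_1)$, since no such pair can be an edge of $G$. For such a pair to be a competitor pair it would need a common out-neighbor $p$; since $a$'s out-arcs live entirely in $D_1$ and $b$'s entirely among the newly added arcs, $p$ must lie in $(V(G_1) \cup \{w_1, \ldots, w_k\}) \cap (V(G_2) \cup \{z\}) = S$. Thus the crux is to verify that no vertex of $S$ is simultaneously the head of a $D_1$-arc from some $a \in V(G_1) \setminus V(G_2)$ and of a newly added arc from some $b \in V(G_2) \setminus V(G_1)$. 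Arranging the preys $p_i$ to lie outside $S$ whenever possible, and handling the extremal cases by exploiting the clique structure of the $C_i$'s, is where the bulk of the verification lies.
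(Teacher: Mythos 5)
The paper does not actually prove this statement; it imports it verbatim from \cite[Theorem 2.2]{twoholes}. So your proposal has to stand on its own, and while its overall strategy (a perfect elimination ordering of $G_2$ ending with the clique $S=V(G_1)\cap V(G_2)$, the edge clique cover by the sets $C_i$, and a single new vertex $z$) is exactly the standard route to this result, the construction as you have set it up does not work, and the part you defer --- ``where the bulk of the verification lies'' --- is precisely where it breaks. The problem is your topological order. You place $u_1,\dots,u_{m-r}$ \emph{before} the block carrying $D_1$. But $C_i=\{u_i\}\cup N_{G_2,>i}(u_i)$ in general contains vertices of $S=\{u_{m-r+1},\dots,u_m\}$ (for instance whenever $u_i$ is adjacent in $G_2$ to a vertex of $S$, which is unavoidable if $G_2$ is connected and $V(G_2)\setminus S\neq\emptyset$), and those vertices sit inside the $D_1$ block, i.e.\ \emph{after} every $u_j$ with $j\le m-r$. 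Hence no $p_i\in\{u_{i+1},\dots,u_{m-r}\}$ can be ``strictly later than $C_i$'': your preferred choice of prey is unavailable exactly in the typical case. You are then forced to take $p_i$ inside $V(D_1)$ or $p_i=z$. In the first case $p_i$ may already have in-arcs in $D_1$, so the vertices of $C_i$ become competitors of the $D_1$-predators of $p_i$, creating edges not in $E(G)$; in the second case $z$ acquires in-neighbourhood $S\cup C_i$, creating edges between $C_i\setminus S$ and $S$ that need not lie in $E(G)$. Relatedly, your ``only dangerous candidates'' analysis is too narrow: two vertices of $V(G_2)$ that are non-adjacent in $G$ also become competitors as soon as two distinct cliques among $S,C_1,\dots,C_{m-r}$ share a prey, so you must also guarantee the preys are pairwise distinct and each has no other in-arcs.

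The repair is to reverse the placement: list the $D_1$ block first and then $u_{m-r},u_{m-r-1},\dots,u_1,z$ at the end. Since $C_i\subseteq\{u_i,\dots,u_m\}$, every vertex of $C_i$ precedes $u_{i-1}$ in this order, so you may assign the prey $u_{i-1}$ to the clique $C_i$ for $2\le i\le m-r$, the prey $u_{m-r}$ to the clique $S$, and the prey $z$ to $C_1$. Each of these $m-r+1$ preys lies outside $V(D_1)$, has no in-arcs in $D_1$, and receives arcs from exactly one clique of $G_2$; hence the new competition edges are exactly $E(G_2)$, no spurious adjacencies arise, acyclicity is immediate, and only the single extra vertex $z$ is added beyond the $k$ vertices of $I_k$, giving $k(G)\le k+1$. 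With that reassignment your argument closes; as written, it does not.
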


\noindent
In this theorem, since $E(G_1) \cup E(G_2)=E(G)$, 
$V(G_1)\cap V(G_2)$ is a clique cut of $G$
if $V(G_2) \setminus V(G_1) \neq \emptyset$ 
and $V(G_1) \setminus V(G_2) \neq \emptyset$. 
Moreover, $V(G_2)\setminus V(G_1)$ and $V(G_1) \cap V(G_2)$ 
induce the chordal graph $G_2$. 
Thus $V(G_1) \cap V(G_2)$ is a chordal cut of $G$ if 
$V(G_2) \setminus V(G_1) \neq \emptyset$ 
and $V(G_1) \setminus V(G_2) \neq \emptyset$.

\subsection{Preliminaries}\label{subsec:prel}

First let us fix basic terminology.
A {\it walk} in a graph $G$ is a vertex sequence
$v_0 v_1 \cdots v_{l-1} v_l$ such that $v_iv_{i+1}$ is an edge of $G$
for $0 \leq i \leq l-1$.
The number $l$ is called the {\it length} of the walk.
We refer to the vertices $v_0$ and $v_l$ as the {\it end vertices}
and the vertices $v_1, \ldots, v_{l-1}$ as the {\it internal vertices}
of the walk.
A {\it $(u,v)$-walk} is a walk $v_0 v_1 \cdots v_{l-1} v_l$ with
$v_0=u$ and $v_l=v$.
For a walk $W$ in $G$, we denote by $W^{-1}$ the
walk represented by the reverse of the vertex sequence of $W$.
A walk $v_0 v_1 \cdots v_{l-1} v_l$ is called a {\it path}
if all the vertices $v_0, v_1, \ldots, v_{l}$ of the walk are distinct.
A {\it $(u,v)$-path} is a $(u,v)$-walk which is a path.
A walk $v_0 v_1 \cdots v_{l-1} v_0$ is called a {\it cycle}
if $v_0 v_1 \cdots v_{l-1}$ is a path, where $l \geq 3$.
The indices of cycles are considered in modulo the length of the cycle.
A {\it chord} for a cycle $v_0 v_1 \cdots v_{l-1} v_0$ is an edge
$v_i v_j$ with $|i-j| \geq 2$.
A {\it chordless cycle} is a cycle having no chord.
A {\it hole} is a chordless cycle of length at least $4$.
We denote
the set of holes in a graph $G$ by $\cH(G)$ and
the number of holes in a graph $G$ by $h(G)$.
A cycle of length $3$ is called a {\it triangle}.

For a hole $C$ in a graph $G$,
we denote by $X_C^{(G)}$ the set of vertices which are adjacent
to all the vertices of $C$:
\begin{equation}\label{eq:1}
X_C^{(G)} = \{v \in V(G) \mid uv \in E(G) \text{ for all } u \in V(C) \}.
\end{equation}
For a graph $G$ and a hole $C$ of $G$,
we call a walk (resp.\ path) $W$
a {\em $C$-avoiding walk} (resp.\ {\em $C$-avoiding path})
if the following hold:
\begin{itemize}
\item[(1)]
None of the internal vertices of $W$ are in $V(C) \cup X_C^{(G)}$,
\item[(2)]
If the length of $W$ is $1$, then
one of the two vertices of $W$ is not in $V(C) \cup X_C^{(G)}$.
\end{itemize}
Let $\cP_{C,uv}^{(G)}$ denote the set of
all $C$-avoiding $(u,v)$-paths in $G$.
For a hole $C \in \cH(G)$ of a graph $G$ and an edge $e=uv \in E(C)$
of the hole $C$,
we define
\begin{eqnarray}
\label{eq:2}
X_{C,e}^{(G)} = X_{C,uv}^{(G)} &:=& X_C^{(G)} \cup \{u,v\}, \\
\label{eq:3}
S_{C,e}^{(G)} = S_{C,uv}^{(G)} &:=& \bigcup_{P \in \cP_{C,uv}^{(G)}} V(P)
\setminus \{u,v\}, \\
\label{eq:4}
T_{C,e}^{(G)} = T_{C,uv}^{(G)} &:=&
\{ w \in V(G) \mid uwv
\in \cP_{C,uv}^{(G)} \}.
\end{eqnarray}
It is easy to see that $T_{C,e}^{(G)} \subseteq S_{C,e}^{(G)}$ and
$S_{C,e}^{(G)} \cap X_{C,e}^{(G)} = \emptyset$.
Also note that the set $S_{C,uv}^{(G)}$ is not empty
if and only if $G$ has a $C$-avoiding $(u,v)$-path.

We call a graph $G$ a {\em hole-edge-disjoint graph} if all the
holes of $G$ are mutually edge-disjoint.
We say that a graph is {\em $K_{2,2,2}$-free} if it does not
contain the complete tripartite graph $K_{2,2,2}$ as an induced subgraph.
The following are some fundamental properties
of $K_{2,2,2}$-free hole-edge-disjoint graphs.

\begin{Lem}\label{lem:1}
Let $G$ be a $K_{2,2,2}$-free hole-edge-disjoint graph
and let $C \in \cH(G)$.
Then the following hold: \\
{\rm (1)}
$G$ has no $C$-avoiding path between two non-adjacent vertices of $C$, \\
{\rm (2)}
$X_C^{(G)}$ is a clique.
\end{Lem}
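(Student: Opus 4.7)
The plan is to prove (1) first via a minimum-counterexample argument, and then to deduce (2) from (1) together with a direct $K_{2,2,2}$ construction in the case $|V(C)|=4$.

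For (1): Suppose for contradiction there exist non-adjacent $u,v\in V(C)$ and a $C$-avoiding $(u,v)$-path in $G$. Choose such a pair $(u,v)$ and a $C$-avoiding $(u,v)$-path $P = u z_1 z_2 \cdots z_m v$ so that $|P|$ is minimum and, subject to this, the length $d_C(u,v)$ of the shorter arc of $C$ between $u$ and $v$ is minimum; note $|P|\geq 2$. Let $A$ and $A'$ be the two arcs of $C$ from $u$ to $v$, and consider the two cycles $C_A := P\cup A$ and $C_{A'} := P\cup A'$, each of length at least $4$. If either cycle were chordless it would be a hole sharing at least $2$ edges with $C$, violating hole-edge-disjointness; so each cycle has a chord. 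A chord within $P$ would provide a strictly shorter $C$-avoiding $(u,v)$-path, contradicting minimality of $|P|$; a chord within an arc would be a chord of the chordless cycle $C$; so every chord must be of the form $z_a v_k$ with $z_a$ an internal vertex of $P$ and $v_k$ an internal vertex of the arc. Such a chord produces a $C$-avoiding $(u,v_k)$-path of length $a+1$ and a $C$-avoiding $(v_k,v)$-path of length $m-a+2$, and by the minimality of $(|P|,d_C(u,v))$ the endpoint $v_k$ is forced to be adjacent on $C$ to both $u$ and $v$ whenever the new path is strictly shorter than $P$ or yields a pair with smaller $d_C$. Since $C$ is chordless, a common $C$-neighbor of $u$ and $v$ exists only when $d_C(u,v)=2$, in which case it is the middle vertex $w$ of the short arc.

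Applied to the long-arc cycle $C_{A'}$, this reasoning is decisive: when $|V(C)|\geq 5$ the long arc has length $\geq 3$ and no common neighbor of $u,v$, and a sub-cycle argument (taking the smallest, and then the largest, index $k$ yielding a chord $z_1 v_k$ with $|P|=2$) forces $z_1 v_3, z_1 v_{l-1} \in E(G)$; an iterated short-cycle filling argument then propagates $z_1 v_k \in E(G)$ for every $k\in\{0,1,\ldots,l-1\}$, giving $z_1 \in X_C^{(G)}$. In the remaining case $|V(C)|=4$ both arcs have length $2$, and a parallel argument on each arc yields $z_1 v_1, z_1 v_3 \in E(G)$; if $m\geq 2$ then $v_1 z_1 v_3$ is a $C$-avoiding path of length $2$ between the non-adjacent vertices $v_1,v_3\in V(C)$, contradicting minimality of $|P|$, so $m=1$ and again $z_1$ is adjacent to all four vertices of $C$. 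In every case we obtain $z_1 \in X_C^{(G)}$, contradicting the fact that $z_1$ is an internal vertex of the $C$-avoiding path $P$ and hence $z_1 \notin V(C)\cup X_C^{(G)}$.

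For (2): Suppose $x,y\in X_C^{(G)}$ with $xy\notin E(G)$ and write $C = v_0 v_1 \cdots v_{l-1} v_0$. If $l=4$, the six vertices $\{x,y,v_0,v_1,v_2,v_3\}$ induce $K_{2,2,2}$ with parts $\{x,y\}$, $\{v_0,v_2\}$, $\{v_1,v_3\}$: the within-part non-edges $xy$, $v_0 v_2$, $v_1 v_3$ come from the assumption and the chordlessness of $C$, while all six cross-part pairs are either edges of $C$ or are of the form $xv_i, yv_i$ with $x,y\in X_C^{(G)}$; this contradicts $K_{2,2,2}$-freeness. If $l\geq 5$, then the $4$-cycle $C' := v_0 x v_2 y v_0$ has no chord (its only candidates $v_0 v_2$ and $xy$ are non-edges), so $C'\in\cH(G)$; moreover $v_3 \notin V(C')$ and $v_3 \notin X_{C'}^{(G)}$ (since $v_0 v_3 \notin E(G)$ for $l\geq 5$), while $xv_3, v_3 y\in E(G)$ by $x,y\in X_C^{(G)}$. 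Hence $x v_3 y$ is a $C'$-avoiding path between the non-adjacent vertices $x,y$ of $C'$, contradicting part (1) applied to $C'$. The main obstacle is the detailed chord-case analysis in (1), especially the iterated short-cycle filling argument that propagates $z_1$'s adjacencies along $C$: each step requires a fresh appeal to hole-edge-disjointness on a carefully chosen sub-cycle, and one must correctly handle the boundary cases $a\in\{1,m\}$ where the minimality of $|P|$ alone does not suffice and the secondary minimization of $d_C(u,v)$ must be invoked.
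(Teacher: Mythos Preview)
The paper does not actually prove this lemma: it simply invokes \cite[Theorem~2.18]{E1}. Your attempt is therefore a genuine self-contained argument, and its architecture is the right one. Part~(2) is complete and correct as written: the $K_{2,2,2}$ construction for $|C|=4$ and the auxiliary $4$-hole $C'=v_0xv_2yv_0$ for $|C|\ge5$ (to which (1) is then applied) both work exactly as you describe.

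Part~(1), however, is only a sketch, and in one place it is misleading. You never establish $|P|=2$; in the phrase ``a sub-cycle argument (taking the smallest, and then the largest, index $k$ yielding a chord $z_1 v_k$ with $|P|=2$)'' you appear to \emph{assume} it, and for $l\ge5$ with $m\ge2$ this does not follow from your minimality hypotheses. Fortunately $m=1$ is unnecessary. Once the two-sided minimality argument on a chord $z_av_k$ of $C_A$ gives $d_C(u,v)=2$ (this does go through, but the boundary cases $a\in\{1,m\}$ genuinely require the secondary $d_C$-minimization, as you note at the end), every chord of $C_A$ has the form $z_a v_1$. Taking the least such $a$, if $a\ge2$ then $v_0 z_1\cdots z_a v_1 v_0$ is a chordless cycle of length $\ge4$ sharing the edge $v_0v_1$ with $C$, contradicting hole-edge-disjointness; hence $z_1v_1\in E(G)$. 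Now the filling argument runs directly from the pair $z_1v_0,\ z_1v_1$: if $K:=\{k: z_1v_k\in E(G)\}$ is not all of $\{0,\dots,l-1\}$, choose cyclically consecutive $k_i,k_{i+1}\in K$ with gap at least $2$; then $z_1 v_{k_i} v_{k_i+1}\cdots v_{k_{i+1}} z_1$ is a hole sharing at least two edges with $C$, a contradiction. This works uniformly for all $l\ge4$ and all $m\ge1$, so the separate long-arc analysis and the special $l=4$ treatment can be dropped. In short, your strategy is correct, but the ``iterated short-cycle filling'' you allude to should be carried out explicitly as above, and the appeal to $|P|=2$ should be removed.
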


\begin{proof}
It follows from \cite[Theorem 2.18]{E1}.
\end{proof}

\begin{Lem}\label{lem:2}
Let $G$ be a $K_{2,2,2}$-free hole-edge-disjoint graph
and let $C \in \cH(G)$ and $e \in E(C)$.
If $S_{C,e}^{(G)} \neq \emptyset$,
then $X_{C,e}^{(G)}$ is a vertex cut of $G$.
\end{Lem}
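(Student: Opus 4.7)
The plan is to write $e = uv$, fix a vertex $w \in S_{C,e}^{(G)}$ lying internally on a $C$-avoiding $(u,v)$-path $P$, and let $v_2$ denote the neighbor of $v$ on $C$ distinct from $u$. The goal is to show that $w$ and $v_2$ lie in different connected components of $G - X_{C,e}^{(G)}$. Since both vertices lie in the component of $G$ that contains $V(C)$ (via $P$ and the edge $vv_2$), while $X_{C,e}^{(G)} = X_C^{(G)} \cup \{u,v\}$ is itself contained in that same component and every other component of $G$ is untouched by the deletion, separating $w$ from $v_2$ suffices to conclude that the total number of components strictly increases, i.e., that $X_{C,e}^{(G)}$ is a vertex cut.

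To effect the separation, I would argue by contradiction. Assume that $G - X_{C,e}^{(G)}$ contains a $(w, v_2)$-path, and choose one such path $Q$ of minimum length. Minimality forces the internal vertices of $Q$ to lie outside $V(C)$: any internal vertex belonging to $V(C) \setminus \{u,v\}$ would allow truncation to a shorter path of the same kind. Combined with the fact that $Q$ already avoids $X_{C,e}^{(G)} \supseteq X_C^{(G)}$, the internal vertices of $Q$ therefore lie outside $V(C) \cup X_C^{(G)}$. Letting $P_1$ be the initial sub-path of $P$ from $u$ to $w$, the concatenation $P_1 \cdot Q$ is a $(u, v_2)$-walk whose internal vertices all miss $V(C) \cup X_C^{(G)}$; extracting a simple $(u, v_2)$-path $P'$ from this walk preserves that property.

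Because $C$ is chordless and has length at least $4$, the vertices $u$ and $v_2$ are non-adjacent in $G$, so $P'$ must have length at least $2$ and therefore qualifies as a $C$-avoiding path between two non-adjacent vertices of the hole $C$. This contradicts Lemma \ref{lem:1}(1), completing the separation and hence the proof.

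The only delicate point is the minimality step: I must justify carefully that a shortest $(w, v_2)$-path in $G - X_{C,e}^{(G)}$ really keeps its internal vertices off $C$, for this is exactly what turns the concatenation with $P_1$ into an honest $C$-avoiding path. Once that observation is secured, the remainder is mechanical — chordlessness forces the length to exceed $1$, and Lemma \ref{lem:1}(1) supplies the contradiction with no further bookkeeping.
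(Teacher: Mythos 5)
Your overall strategy --- show that $w$ gets separated from the rest of the hole and derive a contradiction from Lemma \ref{lem:1}(1) applied to a concatenated path --- is the natural one (the paper itself only cites \cite[Lemma 2.11]{E1} here, so there is no in-text argument to compare against line by line). But the step you yourself flag as delicate is a genuine gap, and it does not close the way you suggest. If $Q$ is a shortest $(w,v_2)$-path in $G-X_{C,e}^{(G)}$ and some internal vertex $x$ of $Q$ lies in $V(C)\setminus\{u,v\}$, truncating $Q$ at $x$ produces a shorter $(w,x)$-path, \emph{not} a shorter $(w,v_2)$-path, so the minimality of $Q$ is not contradicted and you cannot conclude that the internal vertices of $Q$ avoid $V(C)$. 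The correct repair is to take a shortest path in $G-X_{C,e}^{(G)}$ from $w$ to the \emph{set} $V(C)\setminus\{u,v\}$ (equivalently, to cut $Q$ at the first vertex $c$ of $V(C)$ that it meets); since $V(C)\setminus\{u,v\}$ induces a connected subgraph of $G-X_{C,e}^{(G)}$ (it is the path of $C$ complementary to the edge $uv$, and none of its vertices lie in $X_{C,e}^{(G)}$), separating $w$ from any one of its vertices is equivalent to separating $w$ from $v_2$.

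This repair, however, exposes a second omission that your write-up never confronts: the landing vertex $c$ need not be $v_2$; it can be $u_2$, the neighbor of $u$ on $C$ other than $v$. In that case $u$ and $c$ are adjacent, the extracted path is a $C$-avoiding path between \emph{adjacent} vertices of $C$, and Lemma \ref{lem:1}(1) yields no contradiction. To finish, one must switch to the other half of $P$: concatenating the $(w,v)$-portion of $P$ (reversed) with the $(w,c)$-segment gives a $C$-avoiding $(v,u_2)$-path, and $v$ and $u_2$ \emph{are} non-adjacent on a hole of length at least $4$, so Lemma \ref{lem:1}(1) now applies. Your proof uses only the $(u,w)$-half of $P$, which is the telltale sign that this case is missing. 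The remaining ingredients --- the reduction of ``vertex cut'' to ``$w$ and $v_2$ lie in different components,'' the observation that the extracted path has length at least $2$ because $C$ is chordless, and the final appeal to Lemma \ref{lem:1}(1) --- are all sound.
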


\begin{proof}
If the length of $C$ is at least $5$,
then it follows from \cite[Lemma 2.11]{E1}.
We can also prove the lemma similarly when the length of $C$ is equal to $4$
since  $G$ is $K_{2,2,2}$-free.
\end{proof}

\begin{Prop}\label{prop:12}
Let $G$ be a $K_{2,2,2}$-free hole-edge-disjoint graph
and let $C\ \in \cH(G)$ and $e \in E(C)$.
If $S_{C,e}^{(G)} \neq \emptyset$,
then $X_{C,e}^{(G)}$ is a clique cut of $G$.
\end{Prop}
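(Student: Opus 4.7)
The plan is to combine the two preceding lemmas. By Lemma~\ref{lem:2}, the hypothesis $S_{C,e}^{(G)} \neq \emptyset$ already gives that $X_{C,e}^{(G)}$ is a vertex cut of $G$, so the only remaining content is the clique property.

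To show $X_{C,e}^{(G)}$ is a clique, I write $e = uv$ and recall from \eqref{eq:2} that $X_{C,e}^{(G)} = X_C^{(G)} \cup \{u,v\}$. First, $X_C^{(G)}$ is a clique by Lemma~\ref{lem:1}(2). Second, $uv \in E(C) \subseteq E(G)$, so $u$ and $v$ are adjacent. Finally, by the definition \eqref{eq:1} of $X_C^{(G)}$, every vertex $w \in X_C^{(G)}$ is adjacent to all vertices of $C$; in particular $w$ is adjacent to both $u$ and $v$. Hence every pair of vertices in $X_C^{(G)} \cup \{u,v\}$ is joined by an edge, and $X_{C,e}^{(G)}$ induces a complete subgraph of $G$.

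Combining the vertex-cut property from Lemma~\ref{lem:2} with the clique property just verified, $X_{C,e}^{(G)}$ is a clique cut of $G$, which is what we wanted. There is no real obstacle here: the proof is pure assembly, and the only point one has to be careful about is that $u$ and $v$ themselves are not in $X_C^{(G)}$ (they lie on $C$, not adjacent to all of $C$), so one genuinely needs the adjacency of $u,v$ to each vertex of $X_C^{(G)}$ via the defining condition of $X_C^{(G)}$ rather than via Lemma~\ref{lem:1}(2).
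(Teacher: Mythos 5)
Your proof is correct and follows exactly the route the paper takes: the paper's own proof of Proposition~\ref{prop:12} is a one-line citation of Lemmas~\ref{lem:1} and~\ref{lem:2}, and you have simply filled in the (routine) verification that $X_C^{(G)} \cup \{u,v\}$ is a clique. No discrepancy.
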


\begin{proof}
It follows from Lemmas \ref{lem:1} and \ref{lem:2}.
\end{proof}

\begin{Lem}\label{lem:3}
Let $G$ be a $K_{2,2,2}$-free hole-edge-disjoint graph
and let $C \in \cH(G)$ and $e \in E(C)$.
If $S_{C,e}^{(G)} = \emptyset$,
then
the graph $G-e$ obtained from $G$ by deleting the edge $e$
is a $K_{2,2,2}$-free hole-edge-disjoint graph
with $h(G-e) \leq h(G)-1$.
\end{Lem}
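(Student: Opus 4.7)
The plan is to verify, in order, that (i) $G-e$ is $K_{2,2,2}$-free, (ii) no hole of $G-e$ fails to be a hole of $G$, and (iii) the hole-edge-disjointness of $G-e$ and the bound $h(G-e)\le h(G)-1$ follow at once from (ii). Throughout, write $C=u_0u_1u_2\cdots u_{k-1}u_0$ with $u_0=u$, $u_1=v$ and $k\ge 4$, and let $Y:=V(C)\cup X_C^{(G)}$.

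For (i), suppose some six-vertex set $S\subseteq V(G)$ induces $K_{2,2,2}$ in $G-e$. Since $G$ is $K_{2,2,2}$-free, $G[S]$ must have strictly more edges than $K_{2,2,2}$, and the only candidate is $e$, so $G[S]=K_{2,2,2}+e$ and $u,v$ lie in the same bipartition class. Label the other two classes $\{a,b\}$ and $\{c,d\}$; each $x\in\{a,b,c,d\}$ is adjacent to both $u$ and $v$, so $uxv$ would form a $C$-avoiding $(u,v)$-path, contradicting $S_{C,e}^{(G)}=\emptyset$, unless $x\in Y$. Thus $\{a,b\}\subseteq Y$; since $X_C^{(G)}$ is a clique by Lemma \ref{lem:1}(2) but $a,b$ are non-adjacent, not both of $a,b$ lie in $X_C^{(G)}$, so one of them, say $a$, lies in $V(C)\setminus\{u,v\}$. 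But then $a$ is a neighbour of both $u$ and $v$ on the chordless cycle $C$, forcing $a=u_{k-1}$ and $a=u_2$, whence $k=3$, a contradiction.

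For (ii), suppose $C'$ is a hole of $G-e$ that is not a hole of $G$. Since $C'$ is chordless in $G-e$, its only chord in $G$ can be $e$; hence $u,v\in V(C')$, and $C'=P\cup Q$ decomposes into two internally disjoint $(u,v)$-paths in $G-e$, each of length $\ge 2$ (a length-$1$ path would be the edge $e\notin E(G-e)$). Because $S_{C,e}^{(G)}=\emptyset$, neither $P$ nor $Q$ is $C$-avoiding, so each has an internal vertex in $Y$. If some internal vertex $p$ of $P$ lies in $X_C^{(G)}$, choose any internal vertex $q$ of $Q$ in $Y$; the clique property of $X_C^{(G)}$ together with the fact that every element of $X_C^{(G)}$ is adjacent to every vertex of $V(C)$ gives $pq\in E(G)$, and $p,q\notin\{u,v\}$ forces $pq\ne e$, so $pq\in E(G-e)$. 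As $p,q$ sit in different subpaths and are internal, they are non-consecutive on $C'$, making $pq$ a chord of $C'$, contradicting that $C'$ is a hole. By symmetry, no internal vertex of $Q$ lies in $X_C^{(G)}$ either, so every $Y$-interior vertex of $P$ or $Q$ lies in $V(C)\setminus\{u,v\}$. Let $p^*$ be the first internal vertex of $P$ in $Y$ traversing from $u$ to $v$; by choice of $p^*$ the preceding internal vertices avoid $Y$, so the subpath from $u$ to $p^*$ is either of length $1$ (and the edge $up^*$ directly yields adjacency of $u$ and $p^*\in V(C)$) or of length $\ge 2$ and $C$-avoiding (and Lemma \ref{lem:1}(1) yields the same adjacency). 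Chordlessness of $C$ then forces $p^*\in\{v,u_{k-1}\}$, and since $p^*\ne v$ we get $p^*=u_{k-1}$. The identical argument on $Q$ produces the same vertex $u_{k-1}$ as an internal vertex of $Q$, contradicting the internal disjointness of $P$ and $Q$.

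Consequently $\cH(G-e)=\cH(G)\setminus\{C\}$, which immediately gives $h(G-e)=h(G)-1$ and inherits hole-edge-disjointness from $G$. The principal obstacle is the final subcase of (ii), where both $P$ and $Q$ have all their $Y$-interior vertices confined to $V(C)$; this is handled by identifying, via Lemma \ref{lem:1}(1) and the chordlessness of $C$, the unique candidate $u_{k-1}$ for the first such vertex along either path and then exploiting the internal disjointness of $P$ and $Q$ to finish.
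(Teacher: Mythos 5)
Your proof is correct. The paper itself gives no argument here—it simply cites Lemma 3.1 of reference [E1] (arXiv:1102.5718)—so your write-up is a self-contained replacement rather than a variant of an argument in the text. Your route is sound: for $K_{2,2,2}$-freeness you correctly observe that a forbidden induced subgraph of $G-e$ would force $G[S]=K_{2,2,2}+e$ with $u,v$ in one part, and that $S_{C,e}^{(G)}=\emptyset$ pushes all four vertices of the other two parts into $V(C)\cup X_C^{(G)}$, where the clique property of $X_C^{(G)}$ (Lemma \ref{lem:1}(2)) and chordlessness of $C$ give the contradiction $k=3$. For the holes, the decomposition of a putative new hole $C'$ into two internally disjoint $(u,v)$-paths, the elimination of internal vertices in $X_C^{(G)}$ via a chord of $C'$, and the identification of the first $Y$-internal vertex of each arc as $u_{k-1}$ (using Lemma \ref{lem:1}(1) and chordlessness of $C$) all check out; the contradiction with internal disjointness is clean. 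Your conclusion is in fact slightly stronger than the statement, giving $\cH(G-e)=\cH(G)\setminus\{C\}$ and hence $h(G-e)=h(G)-1$ exactly, with hole-edge-disjointness inherited immediately. The only thing worth noting is that you rely on Lemma \ref{lem:1}, which the paper also imports from [E1] without proof, so your argument is only as self-contained as that lemma; given that the paper treats Lemma \ref{lem:1} as available, your proof is complete.
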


\begin{proof}
It follows from \cite[Lemma 3.1]{E1}.
\end{proof}

We close this subsection with noting that the set $S_{C,e}^{(G)}$ in the statements
in Lemma \ref{lem:2}, Proposition \ref{prop:12}, and Lemma \ref{lem:3}
(also in Theorems \ref{thm:chordalcut} and \ref{thm:1})
can be replaced by the set $T_{C,e}^{(G)}$.

\begin{Prop} \label{ST}
Let $G$ be a hole-edge-disjoint graph
and let $C \in \cH(G)$ and $e \in E(C)$.
Then,
$S_{C,e}^{(G)} \neq \emptyset$ if and only if
$T_{C,e}^{(G)} \neq \emptyset$.
\end{Prop}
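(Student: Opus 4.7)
The forward direction of the equivalence is immediate: if $w \in T_{C,e}^{(G)}$, then $uwv$ is a $C$-avoiding $(u,v)$-path whose unique internal vertex $w$ lies in $S_{C,e}^{(G)}$. So the whole content of the proposition is the reverse direction, which I would approach by a ``shortest path'' argument exploiting the hole-edge-disjoint hypothesis.

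Assume $S_{C,e}^{(G)} \neq \emptyset$. Then, since condition~(2) in the definition of a $C$-avoiding path rules out the length-$1$ path $uv$ (both endpoints lie in $V(C)$), the set $\cP_{C,uv}^{(G)}$ contains a path of length at least $2$. Let $P = v_0 v_1 \cdots v_l$ with $v_0=u$, $v_l=v$ be a shortest element of $\cP_{C,uv}^{(G)}$. The plan is to show $l = 2$, so that $v_1 \in T_{C,e}^{(G)}$.

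Suppose toward contradiction that $l \geq 3$. Append the edge $e = uv$ to $P$ to form the cycle $C' := v_0 v_1 \cdots v_l v_0$, of length $l+1 \geq 4$. Since the internal vertices of $P$ avoid $V(C) \cup X_C^{(G)}$, the cycle $C'$ contains at least one vertex (namely $v_1$) that does not lie on $C$; in particular $C' \neq C$, yet $C$ and $C'$ share the edge $e$. By hole-edge-disjointness, $C'$ cannot be a hole, so $C'$ must admit a chord. I would then case-split on where this chord lies on $C'$:
\begin{itemize}
\item a chord $v_0 v_j$ with $2 \leq j \leq l-1$ yields the $(u,v)$-path $v_0 v_j v_{j+1} \cdots v_l$;
\item a chord $v_i v_l$ with $1 \leq i \leq l-2$ yields $v_0 v_1 \cdots v_i v_l$;
\item a chord $v_i v_j$ with $1 \leq i < j-1 \leq l-2$ yields $v_0 v_1 \cdots v_i v_j \cdots v_l$.
\end{itemize}
In each case, the resulting $(u,v)$-path is strictly shorter than $P$, uses only vertices of $P$, and its set of internal vertices is a subset of $\{v_1,\dots,v_{l-1}\}$, hence still disjoint from $V(C) \cup X_C^{(G)}$. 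So it is a $C$-avoiding $(u,v)$-path shorter than $P$, contradicting the minimality of $P$. Thus $l = 2$, giving $v_1 \in T_{C,e}^{(G)}$.

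The argument is essentially self-contained and I expect no genuine obstacle; the only step requiring care is the bookkeeping that each shortcut path is still $C$-avoiding, which amounts to checking that its internal vertex set is a subset of the internal vertex set of $P$, together with observing that the length-$1$ subcase never arises (since $u,v \in V(C)$ prevents any of the shortcuts from collapsing to a single edge $uv$ satisfying condition~(2)).
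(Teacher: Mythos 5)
Your proposal is correct and follows essentially the same route as the paper: take a shortest $C$-avoiding $(u,v)$-path, note that together with $e$ it forms a chordless cycle sharing the edge $e$ with the hole $C$, and invoke hole-edge-disjointness to force that cycle to be a triangle, so the path has length $2$. The paper simply asserts that minimality makes the cycle induced, whereas you unpack this with an explicit chord case analysis; the content is identical.
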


\begin{proof}
Since $T_{C,e}^{(G)} \subseteq S_{C,e}^{(G)}$,
$T_{C,e}^{(G)} \neq \emptyset$ implies
$S_{C,e}^{(G)} \neq \emptyset$.
Now we show the ``only if" part.
Let $e=uv$ and
suppose that $S_{C,uv}^{(G)} \neq \emptyset$.
Then $G$ has a $C$-avoiding $(u,v)$-path.
Let $P :=u w_1 \cdots w_{l-1} v$ be a shortest path
among all $C$-avoiding $(u,v)$-path in $G$,
where $l \geq 2$.
Then the path $P$ and the edge $uv$ form an induced cycle $C'$
which share the edge $uv$ with the hole $C$.
Since $G$ is hole-edge-disjoint, $C'$ must be a triangle.
Therefore, the length $l$ of $P$ is equal to $2$, i.e.,
$P=uw_1v$.
Hence $w_1 \in T_{C,uv}^{(G)}$,
that is, $T_{C,uv}^{(G)} \neq \emptyset$.
\end{proof}

\section{Main Results}\label{sec:main}

For a hole $C$ in a graph $G$ and an edge $e$ of $C$,
if there is no confusion,
we denote the sets $X_C^{(G)}$, $X_{C,e}^{(G)}$, $S_{C,e}^{(G)}$,
and $T_{C,e}^{(G)}$ defined by (\ref{eq:1})-(\ref{eq:4})
simply by $X_C$, $X_{C,e}$, $S_{C,e}$, and $T_{C,e}$, respectively.

Let $G$ be a $K_{2,2,2}$-free hole-edge-disjoint graph.
For a hole $C$ in $G$ and an edge $e=uv$ of $C$, 
we define $Q_{C,e}$ and $U_{C,e}$ as follows 
(see Figure \ref{fig-ex} for an illustration):

\begin{center}
\begin{tabular}{|c|p{4.3in}|}
\hline
$Q_{C,e}$ &
the connected component of $G-X_{C,e}$ containing
$V(C)\setminus\{u,v\}$ \\
\hline
$U_{C,e}$ &
the union of the connected components of $G-X_{C,e}-Q_{C,e}$
each of which contains a vertex in $T_{C,e}$ \\
\hline
\end{tabular}
\end{center}
Note that $T_{C,e} \subseteq V(U_{C,e})$. 
We say that $G$ has the {\em chordal property}
if there exist $C \in \cH(G)$ and $e\in E(C)$ such that
the graph
$G[V(U_{C,e}) \cup X_{C,e}]$ is chordal.
In this definition, $X_{C,e}$ is a chordal cut of $G$ 
if $T_{C,e} \neq \emptyset$. 

\begin{figure}[!hp]
\psfrag{G}{$G$}
\psfrag{Gce}{$U_{C,e}$}
\psfrag{Qce}{$Q_{C,e}$}
\psfrag{X}{$X_{C,e}$}
\psfrag{G-X}{$G-X_{C,e}$}
\begin{center}
\includegraphics[scale=0.8]{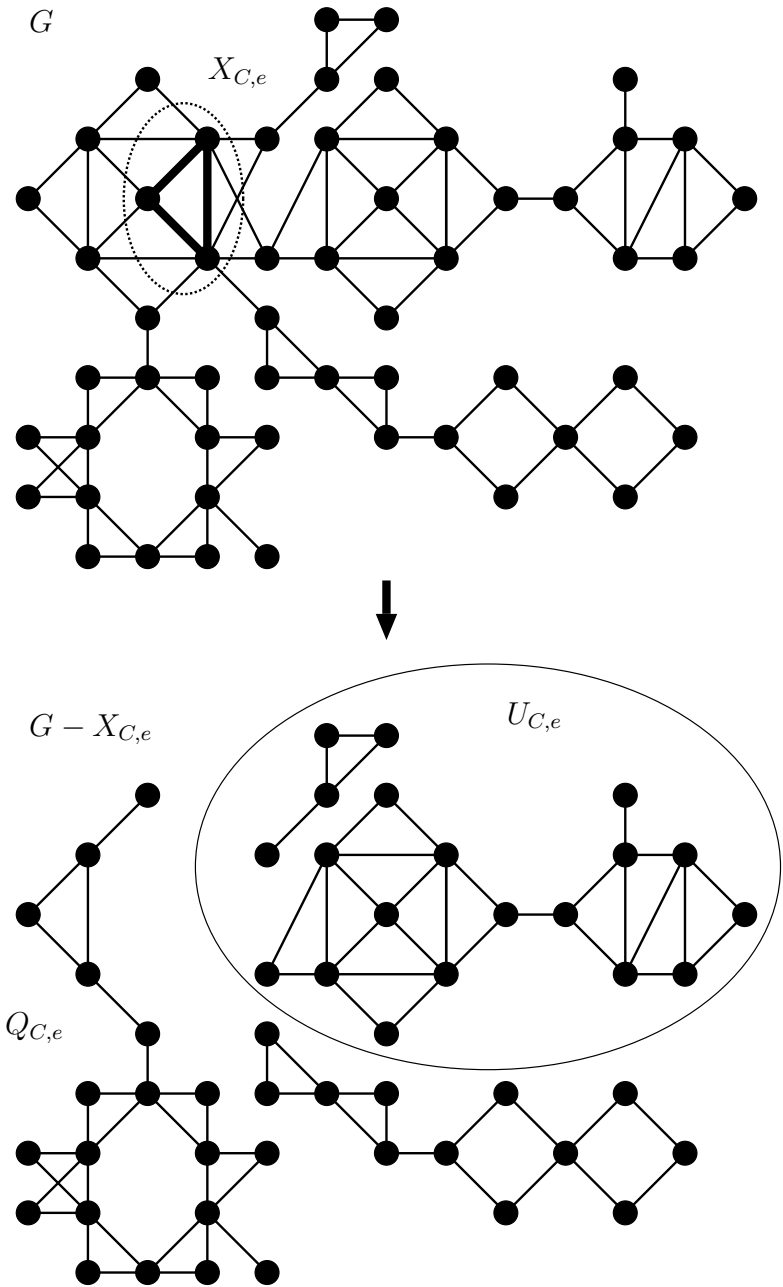}
\caption{$Q_{C,e}$ and $U_{C,e}$ in $G-X_{C,e}$}
\label{fig-ex}
\end{center}
\end{figure}

Now we present our main result:

\begin{Thm}\label{thm:chordalcut}
Let $G$ be a $K_{2,2,2}$-free hole-edge-disjoint graph.
Suppose that $S_{C,e}^{(G)} \neq \emptyset$
for any $C \in \cH(G)$ and any $e \in E(C)$.
Then $G$ has the chordal property.
Consequently, $G$ has a chordal cut.
\end{Thm}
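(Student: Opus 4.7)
The plan is an extremal argument by contradiction. Assuming $\cH(G) \neq \emptyset$ (the statement is vacuous otherwise), I would choose a pair $(C_0, e_0)$ with $C_0 \in \cH(G)$ and $e_0 = u_0 v_0 \in E(C_0)$ so that $|V(U_{C_0, e_0})|$ is as small as possible; by the hypothesis together with Proposition \ref{ST}, $T_{C_0, e_0} \neq \emptyset$, so this minimum is positive. I claim $G[V(U_{C_0, e_0}) \cup X_{C_0, e_0}]$ is chordal, which is the chordal property; the consequence that $G$ has a chordal cut then follows from the remark before the theorem since $T_{C_0, e_0} \neq \emptyset$. Supposing otherwise, this induced subgraph contains a hole $C'$. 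Since $X_{C_0, e_0} = X_{C_0} \cup \{u_0, v_0\}$ is a clique of $G$ (by Lemma \ref{lem:1}(2) together with the fact that every vertex of $X_{C_0}$ is adjacent to each of $u_0, v_0$), the hole $C'$ meets $X_{C_0, e_0}$ in at most two vertices, and these are consecutive on $C'$ if both are present. Since $|V(C')| \ge 4$, one can choose an edge $e' = u'v' \in E(C')$ with both endpoints in $V(U_{C_0, e_0})$.

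Applying the hypothesis to $(C', e')$ and Proposition \ref{ST} produces some $w' \in T_{C', e'}$; by definition, $u' w' v'$ is a $C'$-avoiding path and $w' \notin V(C') \cup X_{C'}$. The plan is then to derive the contradiction $V(U_{C', e'}) \subsetneq V(U_{C_0, e_0})$, violating the minimality of $|V(U_{C_0, e_0})|$. Strictness is automatic once the containment $V(U_{C', e'}) \subseteq V(U_{C_0, e_0})$ is proved, because $u', v' \in V(U_{C_0, e_0})$ while $u', v' \in X_{C', e'}$ are disjoint from $V(U_{C', e'})$. Thus the proof reduces entirely to proving this containment.

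For the containment, I would fix $z \in V(U_{C', e'})$ lying in a component $Y$ of $G - X_{C', e'}$ with $Y \ne Q_{C', e'}$ and some $w' \in Y \cap T_{C', e'}$. Since $w'$ is adjacent to $u' \in V(U_{C_0, e_0})$ and $V(U_{C_0, e_0})$ is a union of components of $G - X_{C_0, e_0}$, we have $w' \in V(U_{C_0, e_0}) \cup X_{C_0, e_0}$. Moreover, any path in $G - X_{C', e'}$ leaving $V(U_{C_0, e_0})$ must pass through a vertex $y \in X_{C_0, e_0} \setminus X_{C', e'}$. The main obstacle, and the technical heart of the proof, is to rule out both (i) the possibility that $w' \in X_{C_0, e_0} = \{u_0, v_0\} \cup X_{C_0}$ and (ii) the existence of such an escape vertex $y$ actually lying in $Y$. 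I would address both by case analysis on where the candidate vertex sits in $X_{C_0, e_0}$, invoking Lemma \ref{lem:1}(1) to forbid $C_0$- or $C'$-avoiding paths between non-adjacent vertices of the respective holes, the edge-disjointness of $C_0$ and $C'$ (which forces $|V(C_0) \cap V(C')| \le 1$ given that $X_{C_0, e_0}$ is a clique), and the $K_{2,2,2}$-freeness of $G$ to produce the requisite contradictions.
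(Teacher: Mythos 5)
Your overall strategy is the same as the paper's, just packaged differently: the paper runs an infinite descent on the sets $V(U_{C_i,e_i}) \cup X_{C_i,e_i}$, while you take a pair $(C_0,e_0)$ minimizing $|V(U_{C_0,e_0})|$ and aim for a single contradiction. These are equivalent, and your preliminary observations (existence of an edge $e'$ of $C'$ with both endpoints in $V(U_{C_0,e_0})$, since a hole meets the clique $X_{C_0,e_0}$ in at most two consecutive vertices; strictness of the inclusion being automatic from $u',v'$) are correct. The problem is that the entire content of the theorem lives in the containment $V(U_{C',e'}) \subseteq V(U_{C_0,e_0})$, which is precisely what the paper's Claims 1--4 are for, and you have not proved it --- you name the two obstacles ($w' \in X_{C_0,e_0}$, and a component of $G - X_{C',e'}$ escaping through $X_{C_0,e_0} \setminus X_{C',e'}$) and then defer both to an unspecified ``case analysis.''

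Moreover, your selection rule for $e'$ is too weak for the containment to hold for that particular edge. Even with $u',v' \in V(U_{C_0,e_0})$, a vertex $w' \in T_{C',e'}$ need only lie in $V(U_{C_0,e_0}) \cup X_{C_0,e_0}$, and if $w' \in X_{C_0,e_0}$, or more generally if some vertex of $T_{C',e'}$ admits a $C'$-avoiding path into $X_{C_0,e_0}$, the component of $G-X_{C',e'}$ containing it can leave $V(U_{C_0,e_0})$ entirely. The paper does not rule this out for an arbitrary edge; instead it shows (Claim 2) that at least $|E(C')|-2 \ge 2$ edges $e$ of $C'$ satisfy $T_{C',e} \subset V(U_{C_0,e_0})$, and then (Claim 3) that among two such edges $e_1,e_2$ at most one can have a $C'$-avoiding path from its $T$-set to $X_{C_0,e_0}$ --- otherwise concatenating the two paths yields a $C'$-avoiding path between non-adjacent vertices of $C'$, contradicting Lemma \ref{lem:1}(1). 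This pigeonhole step over \emph{two} candidate edges is the key idea that makes the containment provable, and it is absent from your proposal; a single arbitrarily chosen edge with both endpoints in $V(U_{C_0,e_0})$ can genuinely be the ``bad'' one. Until you supply an argument of this kind, the proof is incomplete at its central step.
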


\begin{proof} 
Since $S_{C,e} \neq \emptyset$ 
for any $C \in \cH(G)$ and any $e \in E(C)$, 
$T_{C,e} \neq \emptyset$ holds for any $C \in \cH(G)$ and any $e \in E(C)$ 
by Proposition~\ref{ST}. 
In addition, 
by Proposition \ref{prop:12}, 
$X_{C,e}$ is a clique cut of $G$
for any $C \in \cH(G)$ and any $e \in E(C)$. 
We will show that one of 
these clique cuts is a chordal cut of $G$  
by contradiction. 

Suppose that $G$ does not have the chordal property. 
Then, for any $C \in \cH(G)$ and any $e \in E(C)$, the graph 
$G[V(U_{C,e}) \cup X_{C,e} ]$ contains a hole. 
Now we fix a hole $C^* \in \cH(G)$ and an edge $e^* \in E(C^*)$. 
By our assumption, 
$G[V(U_{C^*,e^*}) \cup X_{C^*,e^*}]$ contains a hole. 
Then, for any hole $C$ in $G[V(U_{C^*,e^*}) \cup X_{C^*,e^*}]$, 
the following Claims 1-4 hold:

\begin{Clm}\label{Clm-1}
If there exists $e=uv \in E(C)$ such that
$T_{C,e} \not\subseteq V(U_{C^*,e^*}) \cup X_{C^*,e^*}$,
then $\{u,v \} \subset X_{C^*,e^*}$
\end{Clm}

\begin{proof}[Proof of Claim \ref{Clm-1}]
Since $T_{C,e} \not\subseteq V(U_{C^*,e^*}) \cup X_{C^*,e^*}$,
there exists a vertex $w$ in
$T_{C,e} \setminus (V(U_{C^*,e^*}) \cup X_{C^*,e^*})$.
Suppose that
one of $u$, $v$ is not contained in $X_{C^*,e^*}$.
Without loss of generality, we may assume that $u$
is not in $X_{C^*,e^*}$.
Then
$u$ is contained in $V(U_{C^*,e^*})$
since $u \in V(C)$ and
$V(C)$ is contained in $V(U_{C^*,e^*}) \cup X_{C^*,e^*}$.
Since $w \not\in X_{C^*,e^*}$, the vertices 
$u$ and $w$ are still adjacent in $G-X_{C^*,e^*}$.
However $w \not\in V(U_{C^*,e^*})$ 
while $u \in V(U_{C^*,e^*})$.
This implies that $u$ and $w$ belong to different components
of $G- X_{C^*,e^*}$ and so we reach a contradiction.
Thus, $\{u,v \} \subset X_{C^*,e^*}$.
\end{proof}

\begin{Clm}\label{Clm-2}
$|\{e \in E(C) \mid T_{C,e} \subset V(U_{C^*,e^*}) \}| \geq |E(C)|-2$,
\end{Clm}

\begin{proof}[Proof of Claim \ref{Clm-2}]
By contradiction.
Suppose that
$|\{e \in E(C) \mid T_{C,e} \subset V(U_{C^*,e^*}) \}| < |E(C)|-2$.
Then
$|\{e \in E(C) \mid T_{C,e} \not\subset V(U_{C^*,e^*}) \}|
> |E(C)| - (|E(C)|-2)=2$,
and so there exist two distinct edges $e_1 = u_1v_1$ and $e_2 = u_2v_2$ on $C$ 
such that $T_{C,e_1}\setminus V(U_{C^*,e^*}) \neq \emptyset$ and 
$T_{C,e_2} \setminus  V(U_{C^*,e^*}) \neq \emptyset$. 
Take vertices 
$w_1 \in T_{C,e_1}\setminus V(U_{C^*,e^*})$ and 
$w_2 \in T_{C,e_2} \setminus  V(U_{C^*,e^*})$. 
Then $u_1 w_1 v_1$ and $u_2 w_2 v_2$
are $C$-avoiding paths.
Since $u_1$, $v_1$, $u_2$, $v_2$ are on the hole $C$, 
at least one pair of vertices in $T:=\{u_1, v_1, u_2, v_2\}$ is not adjacent. 
Therefore 
there exists a vertex in $T$ but not in $X_{C^*,e^*}$.
Without loss of generality,
we may assume that $u_1 \not\in X_{C^*,e^*}$.
By Claim \ref{Clm-1},
$T_{C,e_1} \subset V(U_{C^*,e^*}) \cup X_{C^*,e^*}$.
Since $T_{C,e_1} \cap V(U_{C^*,e^*}) = \emptyset$,
$T_{C,e_1} \subset X_{C^*,e^*}$ and so $w_1 \in X_{C^*,e^*}$.
Suppose that $w_2 \in X_{C^*,e^*}$.
Since $X_{C^*,e^*}$ is a clique by Lemma~\ref{lem:1} (2),
$w_1$ and $w_2$ are adjacent.
Then there exist both a $C'$-avoiding $(u_1,u_2)$-path and a
$C$-avoiding $(u_1,v_2)$-path,
contradicting Lemma~\ref{lem:1} (1).
Thus $w_2 \not\in X_{C^*,e^*}$.
Then $w_2 \not\in V(U_{C^*,e^*}) \cup X_{C^*,e^*}$.
By Claim \ref{Clm-1},
$\{u_2,v_2\} \subset V(U_{C^*,e^*}) \cup X_{C^*,e^*}$.
This implies that $u_1 w_1 u_2$ and $u_1 w_1 v_2$
are $C$-avoiding paths, which contradicts Lemma~\ref{lem:1} (1).
\end{proof}

\begin{Clm}\label{Clm-3}
For some $e' \in \{e \in E(C) \mid T_{C,e} \subset V(U_{C^*,e^*}) \}$,
there is no $C$-avoiding path from
any vertex in $T_{C,e'}$ to any vertex in $X_{C^*,e^*}$ in $G$.
\end{Clm}

\begin{proof}[Proof of Claim \ref{Clm-3}]
By Claim \ref{Clm-2},
there exist two distinct edges $e_1 = u_1v_1$ and $e_2 = u_2v_2$
in $\{e \in E(C) \mid T_{C,e} \subset V(U_{C^*,e^*})\}$.
That is, $T_{C,e_1}\subset V(U_{C^*,e^*})$
and $T_{C,e_2} \subset V(U_{C^*,e^*})$.
Suppose that $G$ has $C$-avoiding paths $P_1$ and $P_2$
from $w_1$ to a vertex in $X_{C^*,e^*}$
and from $w_2$ to a vertex in $X_{C^*,e^*}$,
respectively, for some  $w_1 \in T_{C,e_1}$
and $w_2 \in T_{C,e_2}$.
Then $P_1 P_2^{-1}$ contains a $C$-avoiding $(w_1,w_2)$-path.
However, this path extends to a $C$-avoiding
$(u_1,v_2)$-path, which contradicts Lemma~\ref{lem:1} (1).
This argument implies that for at
least one of $T_{C,e_1}$, $T_{C,e_2}$,
$G$ has no $C$-avoiding path from any of its vertices
to any vertex in $X_{C^*,e^*}$.
Without loss of generality, we may assume that $T_{C,e_1}$
satisfies this property
(for, otherwise, we can relabel the vertices on $C$
so that the vertex $u_2$ is labeled as $u_1$).
\end{proof}

\begin{Clm}\label{Clm-4}
For some $e' \in \{e \in E(C) \mid T_{C,e} \subset V(U_{C^*,e^*}) \}$,
\[
V(U_{C,e'}) \cup X_{C,e'} \subsetneq V(U_{C^*,e^*}) \cup X_{C^*,e^*}.
\]
\end{Clm}

\begin{proof}[Proof of Claim \ref{Clm-4}]
By Claim \ref{Clm-3},
there exists
$e_1=u_1v_1 \in \{e \in E(C) \mid T_{C,e} \subset V(U_{C^*,e^*}) \}$,
such that $G$ has no $C$-avoiding path from
any vertex in $T_{C,e}$ to any vertex in $X_{C^*,e^*}$.
Since $V(C) \subset V(U_{C^*,e^*}) \cup X_{C^*,e^*}$ by the choice of $C$, 
it holds that
$\{u_1,v_1\} \subset V(U_{C^*,e^*}) \cup X_{C^*,e^*}$.
Now take a vertex $x$ in $X_{C}$.
If $x \not\in X_{C^*,e^*}$, then $x$ is still adjacent to a vertex on $C$
in $G-X_{C^*,e^*}$ and so $x \in V(U_{C^*,e^*})$.
Therefore $X_{C} \subset V(U_{C^*,e^*}) \cup X_{C^*,e^*}$
and thus $X_{C,e_1} = X_{C} \cup \{ u_1,v_1 \} \subset
V(U_{C^*,e^*}) \cup X_{C^*,e^*}$.
Now it remains to show that
$V(U_{C,e_1}) \subset V(U_{C^*,e^*}) \cup X_{C^*,e^*}$.
Take a vertex $y$ in $U_{C,e_1}$.
Then $y$ belongs to a component $W$ of $G- X_{C,e_1}$.
By the definition of $U_{C,e_1}$,
$V(W) \cap T_{C,e_1} \neq \emptyset$.
Take a vertex $z$ in $V(W) \cap T_{C,e_1}$.
Then, since any vertex in $W$ and $z$ belong to a component of $U_{C,e_1}$,
any vertex in $W$ and $z$ are connected by a $C$-avoiding path.
Thus, by Claim \ref{Clm-3},
$W \cap X_{C^*,e^*} =\emptyset$
and so $W$ is a connected subgraph of $G - X_{C^*,e^*}$.
Since $T_{C,e_1} \subset V(U_{C^*,e^*})$,
we have $z \in V(U_{C^*,e^*})$.
Therefore, $V(W) \subset V(U_{C^*,e^*})$ since $z$ belongs to $W$,
which is connected in $G-X_{C^*,e^*}$.
Since $y \in V(W)$, we have $y \in V(U_{C^*,e^*})$.
We have just shown that $V(U_{C,e_1}) \subset V(U_{C^*,e^*})$.
Hence $V(U_{C,e_1}) \cup X_{C,e_1} \subset V(U_{C^*,e^*}) \cup X_{C^*,e^*}$.
Furthermore, by Claim \ref{Clm-2},
there is another edge $e_2=u_2 v_2 \in E(C)$ such that
$T_{C,e_2} \subset V(U_{C^*,e^*})$.
Now take $w_2$ in $T_{C,e_2}$.
Then $w_2 \in V(U_{C^*,e^*}) \cup X_{C^*,e^*}$.
However, $w_2 \not\in V(U_{C,e_1}) \cup X_{C,e_1}$
since $w_2$ is still adjacent to at least one of $u_2$,
$v_2$ in $G- X_{C,e_1}$.
Thus $V(U_{C,e_1}) \cup X_{C,e_1}
\subsetneq V(U_{C^*,e^*}) \cup X_{C^*,e^*}$
and the claim follows.
\end{proof}

To complete the proof, we denote by ${\mathcal H}_{C,e}$ 
the set of holes in $G[V(U_{C,e}) \cup X_{C,e} ]$ 
for $C \in \cH(G)$ and $e \in E(C)$. 
Let $C_1 \in \cH(G)$ and $e_1 \in E(C_1)$.
By our assumption that $G$ does not have the chordal property, 
there exists a hole $C_2 \in {\mathcal H}_{C_1,e_1}$.
By Claim \ref{Clm-4},
there exists
$e_2 \in \{e \in E(C_2) \mid T_{C_2,e} \subset V(U_{C_1,e_1})\}$
such that $V(U_{C_2,e_2}) \cup X_{C_2,e_2}
\subsetneq V(U_{C_1,e_1}) \cup X_{C_1,e_1}$.
Again, by our
assumption, 
there exists a hole $C_{3} \in \cH_{C_2,e_2}$.
Then, by Claim \ref{Clm-4}, there exists
$e_3 \in \{e \in E(C_3) \mid T_{C_3,e} \subset V(U_{C_2,e_2})\}$
such that $V(U_{C_3,e_3}) \cup X_{C_3,e_3} \subsetneq
V(U_{C_2,e_2}) \cup X_{C_2,e_2}$.
Repeating this process,
we have $C_1, C_2, \ldots, C_i, \ldots$
and $e_1, e_2, \ldots, e_i, \ldots$
such that
\[
V(U_{C_1,e_1}) \cup X_{C_1,e_1}
\supsetneq
V(U_{C_2,e_2}) \cup X_{C_2,e_2}
\supsetneq
\cdots
\supsetneq
V(U_{C_i,e_i}) \cup X_{C_i,e_i}
\supsetneq
\cdots,
\]
which is impossible since
$V(U_{C_1,e_1}) \cup X_{C_1,e_1}$ is
finite.
This completes the proof.
\end{proof}

The following theorem gives another sufficient condition for
the existence of a chordal cut.

\begin{Thm}\label{thm:1}
Let $G$ be a $K_{2,2,2}$-free hole-edge-disjoint graph.
Suppose that
there exists a hole $C \in \cH(G)$
such that
\[
|\{ e \in E(C) \mid S_{C,e}^{(G)} \neq \emptyset \}| \geq h(G).
\]
Then $G$ has the chordal property.
Consequently, $G$ has a chordal cut.
\end{Thm}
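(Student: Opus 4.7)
The plan is to argue by contradiction using a counting argument on the holes of $G$. Assume $G$ does not have the chordal property, and set $E^{*} := \{e \in E(C) \mid S_{C,e}^{(G)} \neq \emptyset\}$, so $|E^{*}| \geq h(G)$ by hypothesis. First I would dispose of the case $E(C) \setminus E^{*} \neq \emptyset$: for any $e \in E(C) \setminus E^{*}$, Proposition \ref{ST} gives $T_{C,e} = \emptyset$, so $U_{C,e}$ is empty and $G[V(U_{C,e}) \cup X_{C,e}] = G[X_{C,e}]$ is a complete graph (since $X_{C}$ is a clique by Lemma \ref{lem:1}(2), and the two endpoints of $e$ are adjacent to every vertex of $X_{C}$ and to each other), hence chordal, contradicting the counter-assumption. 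Thus $E^{*} = E(C)$ and $|E(C)| \geq h(G)$. For each $e \in E(C)$, the subgraph $G[V(U_{C,e}) \cup X_{C,e}]$ then contains some hole $D_{e}$, and since $V(C) \setminus \{u,v\} \subseteq V(Q_{C,e})$ is disjoint from $V(U_{C,e}) \cup X_{C,e}$, we have $V(D_{e}) \cap V(C) \subseteq \{u,v\}$; in particular $D_{e} \neq C$.

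The crux is a \emph{distinctness claim}: for two distinct edges $e_{1}=u_{1}v_{1}$ and $e_{2}=u_{2}v_{2}$ of $C$, no hole of $G$ can lie simultaneously in both $G[V(U_{C,e_{1}}) \cup X_{C,e_{1}}]$ and $G[V(U_{C,e_{2}}) \cup X_{C,e_{2}}]$. Assume such a $D$ exists. The vertex containment in each subgraph forces $V(D) \cap V(C) \subseteq \{u_{1},v_{1}\} \cap \{u_{2},v_{2}\}$, which has at most one element, and $|V(D) \cap X_{C}| \leq 2$ because $X_{C}$ is a clique and $D$ is chordless of length at least $4$; therefore $|V(D) \setminus (V(C) \cup X_{C})| \geq |V(D)| - 3 \geq 1$. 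Pick $y$ in this set; then $y \notin X_{C,e_{i}}$ for $i=1,2$, so $y \in V(U_{C,e_{1}}) \cap V(U_{C,e_{2}})$. Within each $U_{C,e_{i}}$ the vertex $y$ is joined to some $w_{i} \in T_{C,e_{i}}$ by a path whose vertices all lie in $V(U_{C,e_{i}})$ and hence avoid $V(C) \cup X_{C}$. A short case analysis on whether $e_{1}$ and $e_{2}$ share a vertex yields $a \in \{u_{1},v_{1}\}$ and $b \in \{u_{2},v_{2}\}$ that are non-adjacent on $C$ (using $|V(C)| \geq 4$). Since $w_{i}$ is adjacent to every endpoint of $e_{i}$, concatenating the edge $aw_{1}$, the reverse of the path from $y$ to $w_{1}$, the path from $y$ to $w_{2}$, and the edge $w_{2}b$ produces a $C$-avoiding $(a,b)$-walk, and hence a $C$-avoiding $(a,b)$-path, contradicting Lemma \ref{lem:1}(1).

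Granted the distinctness claim, the hole sets $\cH(G[V(U_{C,e}) \cup X_{C,e}])$ for $e \in E^{*}$ are pairwise disjoint and each nonempty, so we may choose the $D_{e}$'s pairwise distinct; together with $C$ they furnish at least $|E^{*}|+1 \geq h(G)+1$ distinct holes in $G$, contradicting $|\cH(G)|=h(G)$. Therefore $G$ has the chordal property, and since the witness edge may be taken in $E^{*}$ (so $T_{C,e} \neq \emptyset$), $X_{C,e}$ is in fact a chordal cut. The main obstacle is the distinctness claim: the nontrivial point is to show that a common interior vertex $y$ of two such hole-subgraphs can be used to stitch together the $T_{C,e_{1}}$- and $T_{C,e_{2}}$-paths into a forbidden $C$-avoiding path between non-adjacent vertices of $C$.
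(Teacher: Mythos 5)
Your proof is correct and follows essentially the same pigeonhole strategy as the paper: among the $\geq h(G)$ edges $e$ of $C$ with $S_{C,e}^{(G)}\neq\emptyset$, one must yield a hole-free $G[V(U_{C,e})\cup X_{C,e}]$, because $C$ itself lies in none of these subgraphs and no hole of $G$ can lie in two of them. Your distinctness claim (stitching the two $T_{C,e_i}$-paths through a common vertex $y$ into a forbidden $C$-avoiding path between non-adjacent vertices of $C$) is precisely the justification for the counting step that the paper's one-line proof leaves implicit.
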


\begin{proof}
Let $C$ be a hole of $G$
such that
$|\{ e \in E(C) \mid S_{C,e} \neq \emptyset \}| \geq h(G)$.
Then there exists an edge $e \in E(C)$
such that $S_{C,e} \neq \emptyset$ and
$G[V(U_{C,e}) \cup X_{C,e}]$ does not contain
any holes, i.e., $G[V(U_{C,e}) \cup X_{C,e}]$ is a chordal graph. 
By Proposition \ref{prop:12},
$X_{C,e}$ is a clique cut.
Therefore $X_{C,e}$ is a chordal cut of $G$ 
and thus the theorem holds.
\end{proof}

\section{An Application}\label{sec:comp}

Kim~\cite{compone} conjectured that
$k(G) \leq h(G)+1$ holds for a graph $G$
(see \cite{ck, Kamibeppu, compone, KLS, KLPS, twoholes, maxclique,
E1, indep, twoholes2} for the studies on this conjecture).
It was shown in \cite{indep} (see also \cite{KLS}) that 
this conjecture is true for 
any $K_{2,2,2}$-free hole-edge-disjoint graph. 
Theorem \ref{thm:chordalcut} gives another proof for it.

\begin{Thm}\label{thm:appl}
If $G$ is a $K_{2,2,2}$-free hole-edge-disjoint graph with exactly
$h$ holes, then the competition number of $G$ is at most $h+1$.
\end{Thm}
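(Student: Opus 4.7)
The plan is to prove $k(G) \leq h+1$ by strong induction on $h = h(G)$, with base case $h = 0$ handled by Roberts' classical result that chordal graphs have competition number at most $1$. In the inductive step the argument splits on whether every $S_{C,e}^{(G)}$ is nonempty. If some $S_{C,e}^{(G)} = \emptyset$ (with $C \in \cH(G)$ and $e = uv \in E(C)$), Lemma~\ref{lem:3} yields that $G - e$ is $K_{2,2,2}$-free hole-edge-disjoint with $h(G - e) \leq h - 1$; the inductive hypothesis gives $k(G - e) \leq h$, and Theorem~\ref{chordalpart} applied with $G_1 = G - e$ and $G_2 = (\{u,v\}, \{uv\})$ (chordal, with clique intersection $\{u,v\}$) delivers $k(G) \leq h + 1$.

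In the remaining branch every $S_{C,e}^{(G)}$ is nonempty, so Theorem~\ref{thm:chordalcut} produces $(C,e)$ with $G_A := G[V(U_{C,e}) \cup X_{C,e}]$ chordal. Setting $G_B := G[V(G) \setminus V(U_{C,e})]$, the goal is to invoke Theorem~\ref{chordalpart} with chordal piece $G_2 = G_A$ and a companion $G_1$ on vertex set $V(G_B)$, so that $V(G_1) \cap V(G_2) = X_{C,e}$ is a clique of $G_A$. The main obstacle I expect is that the naive choice $G_1 = G_B$ does not reduce the hole count: since $X_{C,e}$ is a clique, every hole of $G$ lies entirely on one side of the cut, and because $G_A$ is chordal no hole lies there, so every hole of $G$ survives in $G_B$; hence $h(G_B) = h(G)$ and the inductive hypothesis cannot be applied to $G_B$.

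The resolution is to take $G_1 = G_B - e$ instead, exploiting the noted fact $T_{C,e}^{(G)} \subseteq V(U_{C,e})$. Removing $V(U_{C,e})$ to form $G_B$ erases every common $(u,v)$-neighbor witnessing a length-two $C$-avoiding path, so $T_{C,e}^{(G_B)} = \emptyset$; since $G_B$ is hole-edge-disjoint as an induced subgraph of $G$, Proposition~\ref{ST} gives $S_{C,e}^{(G_B)} = \emptyset$, and Lemma~\ref{lem:3} applied inside $G_B$ produces a $K_{2,2,2}$-free hole-edge-disjoint graph $G_B - e$ with $h(G_B - e) \leq h - 1$. The inductive hypothesis now yields $k(G_B - e) \leq h$. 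Since $e \in E(G_A)$ (both endpoints lying in $X_{C,e}$), the pair $(G_1, G_2) = (G_B - e, G_A)$ satisfies $E(G_1) \cup E(G_2) = E(G)$ with clique intersection $X_{C,e}$, and a final application of Theorem~\ref{chordalpart} gives $k(G) \leq k(G_B - e) + 1 \leq h + 1$, completing the induction.
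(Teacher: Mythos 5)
Your proof is correct and follows essentially the same route as the paper: induction on $h$, the same case split on whether some $S_{C,e}^{(G)}$ is empty, and in the main case the same decomposition $G_1 = G[V(G)\setminus V(U_{C,e})]-e$, $G_2 = G[V(U_{C,e})\cup X_{C,e}]$ fed into Theorem~\ref{chordalpart}. The only cosmetic difference is in the easy case, where you reuse Theorem~\ref{chordalpart} with $G_2$ equal to the single edge $e$ instead of explicitly adding a new prey vertex to the digraph, which is an equivalent argument.
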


\begin{proof}
We prove by induction on $h=h(G)$.
The case $h=0$ corresponds to a theorem by Roberts
\cite[Corollary 3]{cn}.
Suppose that the statement holds
for any $K_{2,2,2}$-free hole-edge-disjoint
graph with exactly $h-1$ holes for $h \geq 1$.
Let $G$ be a $K_{2,2,2}$-free hole-edge-disjoint graph
with exactly $h$ holes.

First, we consider the case where 
there exist $C \in \cH(G)$ and $e \in E(C)$
such that $T_{C,e} = \emptyset$. 
Then, by Proposition~\ref{ST}, $S_{C,e}=\emptyset$.
By Lemma \ref{lem:3},
$G-e$ is a $K_{2,2,2}$-free hole-edge-disjoint
graph with at most $h-1$ holes.
By induction hypothesis, there exists an acyclic digraph $D'$
such that $C(D')=(G-e) \cup I_h$, where
$I_h$ is the set of $h$ new vertices.
We define a digraph $D$ from $D'$ by
$V(D) = V(D') \cup \{ z \}$ and
$A(D) = A(D') \cup \{(u,z), (v,z)\}$,
where $uv=e$ and $z$ is a new vertex.
Then it is easy to
check that $D$ is acyclic and that
$C(D)=G \cup I_h \cup \{z\}$.

Next, we consider the case where
$T_{C,e} \neq \emptyset$ for any $C \in \cH(G)$ and any $e \in E(C)$. 
Then, by Proposition~\ref{ST}, $S_{C,e} \neq \emptyset$. 
Therefore, by Theorem~\ref{thm:chordalcut}, 
$G$ has the chordal property.
That is, there exist a hole $C^* \in \cH(G)$ and an edge $e^* \in E(C)$
such that $G[V(U_{C^*,e^*}) \cup X_{C^*,e^*} ]$ is chordal.
Let $H_{C^*,e^*}$ be the subgraph of $G$ induced by
$V(G) \setminus V(U_{C^*,e^*})$.
Then $H_{C^*,e^*}$ does not contain any $C$-avoiding
path by the definition of $U_{C^*,e^*}$.
Moreover, $H_{C^*,e^*}$ is $K_{2,2,2}$-free.
Thus the graph $G_1:=H_{C^*,e^*} - e^*$ contains at most $h-1$ holes
by Lemma \ref{lem:3}.
By the induction hypothesis, we have $k(G_1) \leq h$.
Let $G_2 := G[V(U_{C^*,e^*}) \cup X_{C^*,e^*} ]$.
Then $G_2$ is a chordal graph
and $X_{C^*,e^*}$ is a clique of $G_2$.
Moreover, $E(G_1) \cup E(G_2)=E(G)$, and
$V(G_1) \cap V(G_2)= X_{C^*,e^*}$.
Hence, by Theorem~\ref{chordalpart}, $k(G) \le h+1$.
\end{proof}


\end{document}